\theoremstyle{plain}
\newtheorem{theorem}{Theorem}
\newtheorem{lemma}[theorem]{Lemma}
\newtheorem{proposition}[theorem]{Proposition}
\newtheorem{definition}[theorem]{Definition}
\theoremstyle{remark}
\newtheorem*{corollary}{Corollary}
\newtheorem*{remark}{Remark}
\newtheorem*{example}{Example}
\title{Pushouts of affine algebraic sets}
\author{Jakub Kop\v{r}iva}
\date{January 11, 2021}
\begin{document}

\maketitle

\begin{abstract}
	We study the problem of existence of pushouts in the category of algebraic sets over an infinite field. This problem can be reduced to asking whether the property of being a finitely generated algebra over a field, or a Noetherian ring in general, is preserved under taking pullbacks. We show that this problem can be fully solved in terms of preservation of this property under taking intersections. In addition, we also discuss specific cases of pushouts of algebraic sets and intersections of Noetherian rings.
\end{abstract}

\noindent \textbf{Key words:} Pullbacks, pushouts, Noetherian rings, finitely generated algebras\\
\noindent \textbf{MSC2010:} 13E05, 13E15 (Primary) 13B30 (Secondary)

\section*{Introduction}
In this article, we study the problem of existence of pushouts in the category of algebraic sets over an infinite field $K$. This seemingly natural question is often sidestepped by working in a larger category (such as schemes in \cite{Fer} and \cite{Sch}) since affine algebraic sets are known not to be closed as a category under similar constructions. However, we find this question interesting from the point of view of commutative algebra.

We prove that there exists a pushout of a diagram of $K-$algebraic sets if and only if the pullback of the corresponding diagram of coordinate rings is a finitely generated $K-$algebra. This leads us to study the question of whether a pullback of a diagram of commutative Noetherian rings or finitely generated algebras over a commutative Noetherian ring $R$ is Noetherian or a finitely generated $R-$algebra, respectively.

Using the primary decomposition of the zero ideal, we show that, effectively, this question is equivalent to testing whether some extensions of rings or $R-$algeras are finite (meaning that the larger ring or $R-$algebra is a finitely generated module over the smaller one) and whether some interesections of two Noetherian rings or finitely generated algebras over $R$ are Noetherian or finitely generated over $R$, respectively. Thereby, we obtain generalizations of related results of \cite{Fon} and \cite{BW}.

Finally, we treat some examples. We prove that in specific cases an interesection of two Noetherian rings or finitely generated algebras over $R$ is, indeed, Noetherian or finitely generated over $R$, respectively. In one case, we actually make use of the very tools we develop to tackle the preservation of being Noetherian (a finitely generated algebra over a commutative Noetherian ring) under forming pullbacks. We also study some pushouts of algebraic sets with focus on their local properties.

\section{Pushouts of affine algebraic sets}\label{Sec1}
	Fix $K$ an infinite field, and consider, for example, these diagrams of algebraic sets (over $K$) with natural maps:

\begin{center}
	\begin{tikzcd}
 		& \mathbb{A}_K^1 \arrow{dl} \arrow{dr}{(x,0)} &\\
 		\mathbb{A}_K^0 & &\mathbb{A}_K^2
	\end{tikzcd}
	\begin{tikzcd}
 		& \mathbb{A}_K^0 \sqcup \mathbb{A}_K^0 \arrow{dl} \arrow{dr}{\{0\} \sqcup \{1\}} &\\
 		\mathbb{A}_K^0 & &\mathbb{A}_K^1
	\end{tikzcd}
\end{center}

The pushout of the diagram on the left, $P$, would correspond to contracting the $x-$axis (or a line in general) in a two dimensional affine space into a single point. Similarly, the pushout on the right, $Q$, would correspond to contracting two points on an affine line into a single point.

It is folklore knowledge that the first pushout does not exists in the category of affine algebraic sets (see Example 3.5 on page 7 in \cite{Sch}), but the second one can be constructed; it is isomorphic to the curve $V(y^2 - yx - x^3) \subseteq \mathbb{A}^2_k$. We discuss this example and generalizations of the pushout $Q$ thereof below in Section \ref{Sec5}.

In the following text, we try to characterise under which circumstances it is possible to form a pushout of a diagram of affine algebraic sets with polynomial maps (naturally, in the category of algebraic sets). In other words, we want to know when we can glue two affine algebraic sets via another algebraic set that is mapped to both of them.

The existence of pushouts $P$ and $Q$, respectively, would, through the contravariant equivalence between the category of algebraic sets and the category of coordinate rings (for general facts of classical algebraic geometry, we refer the reader to \cite{Eis}), mean that the coordinate rings $K[P]$ and $K[Q]$ are pullbacks in the category of coordinate rings of the respective diagrams.

Moreover, we show that if $K[P]$ and $K[Q]$ were finitely generated as algebras over $K$, they would need to be the pullbacks of the respective diagrams in the category of $K-$algebras. This is due to the following lemmas:

\begin{lemma}\label{L1}
	Let $X$ be a non-empty $K-$algebraic set. Then, any finitely generated $K-$subalgebra of $K[X]$ is a coordinate ring of some algebraic set over $k$.
\end{lemma}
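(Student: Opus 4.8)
The plan is to realize the subalgebra $A$ as the coordinate ring of the Zariski closure of the image of $X$ under the morphism determined by a chosen generating set of $A$. First I would pick generators $a_1,\ldots,a_m \in K[X]$ with $A = K[a_1,\ldots,a_m]$; since $X$ is non-empty we have $K[X]\neq 0$ and $A \supseteq K$, so $A$ is a non-zero finitely generated $K$-algebra. These generators define a morphism of algebraic sets $f = (a_1,\ldots,a_m)\colon X \to \mathbb{A}_K^m$ and, dually, the $K$-algebra homomorphism $\pi = f^{*}\colon K[y_1,\ldots,y_m] \to K[X]$ sending $y_i \mapsto a_i$, whose image is precisely $A$. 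Writing $J = \ker \pi$, we obtain $A \cong K[y_1,\ldots,y_m]/J$, and it remains to exhibit $J$ as the vanishing ideal of an algebraic set in $\mathbb{A}_K^m$.

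The natural candidate is the Zariski closure $Y = \overline{f(X)} = V(I(f(X)))$, and the heart of the argument is the identity $J = I(f(X))$. This follows from the defining property of a coordinate ring, namely that an element $h \in K[X]$ is zero if and only if it vanishes at every point of $X$: for $g \in K[y_1,\ldots,y_m]$ one has $\pi(g) = g(a_1,\ldots,a_m)$, whose value at a point $p \in X$ is $g(f(p))$, so $g \in J$ exactly when $g(f(p)) = 0$ for all $p \in X$, which is exactly the condition $g \in I(f(X))$. Consequently $Y = V(J)$, and from $f(X)\subseteq Y$ I get $I(Y) \subseteq I(f(X)) = J$, while the Galois connection gives $J \subseteq I(V(J)) = I(Y)$; hence $I(Y) = J$.

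Putting this together yields $A \cong K[y_1,\ldots,y_m]/J = K[y_1,\ldots,y_m]/I(Y) = K[Y]$, so $A$ is the coordinate ring of the algebraic set $Y$. I expect the only genuine obstacle to be the verification of $J = I(f(X))$; once the two evaluation-based inclusions are in place the remainder is formal bookkeeping with the operators $I(-)$ and $V(-)$. It is worth noting that the standing hypothesis that $K$ is infinite is exactly what guarantees that $\mathbb{A}_K^m$ has coordinate ring the full polynomial ring $K[y_1,\ldots,y_m]$, so that $\pi$ really does present $A$ as a quotient of the coordinate ring of an affine space.
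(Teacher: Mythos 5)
Your proposal is correct and follows essentially the same route as the paper's own proof: both realize the subalgebra as the image of the evaluation map $K[y_1,\dots,y_m]\to K[X]$, $y_i\mapsto a_i$, identify its kernel with $I(f(X))$ via pointwise vanishing on $X$, and then use $I(V(I(f(X))))=I(f(X))$ to present the subalgebra as the coordinate ring of the closure $V(I(f(X)))$. Your closing remark locates the use of infiniteness slightly differently than the paper does (the paper invokes it when equating vanishing of $g(a_1,\dots,a_m)$ in $K[X]$ with vanishing at all points of $X$), but this is a matter of emphasis, not substance.
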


\begin{proof}
	Let $B$ be a finitely generated $K-$subalgebra of $K[X]$ with generators $\Phi_1, \dots, \Phi_n.$ There exists a unique homomorphism $\varphi: K[x_1, \dots, x_n] \to B$ such that $x_i \mapsto \Phi_i$ for all $i \in \{1, \dots, n\}$. However, $\varphi$ can be thought of as a homomorphism from $K[x_1, \dots, x_n]$ to $K[X]$. Then, due to $K$ being an infinite field, $\varphi$ gives rise to a polynomial map $\Phi = (\Phi_1, \dots, \Phi_n): X \to \mathbb{A}^n_K.$ Naturally, this map factorizes through the inclusion $\overline{\Phi(X)} \subseteq \mathbb{A}_k^n$. This means that $\varphi$ factorizes through the projection $K[x_1, \dots, x_n]/I(\Phi(X))$; therefore, $\mathrm{Ker}\;\varphi$ contains $I(\Phi(X)).$ On the other hand, let us have $f \in \mathrm{Ker}\,\varphi$. Then: $$0 = f(\Phi_1, \dots, \Phi_n)(x) = f(\Phi_1(x), \dots, \Phi_n(x))$$ for all $x \in X$, hence $f$ vanishes on $\Phi(X)$, and we established that $\mathrm{Ker}\,\varphi = I(\Phi(X))$, which means that $B \cong K[x_1, \dots, x_n]/I(\Phi(X))$.
\end{proof}

\begin{lemma}\label{L2}
	Suppose $\varphi: B \to A$ a $\psi: C \to A$ are homomorphisms of coordinate rings of affine algebraic sets over $K$. Then, $D$, with naturally denoted morphisms $\theta_B, \theta_C$, is the pullback of the corresponding diagram in the category of coordinate rings if and only if $D$ is also the pullback of corresponding diagram in the category of all $K-$algebras.
\end{lemma}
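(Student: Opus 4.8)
The plan is to exploit that the category of coordinate rings is a \emph{full} subcategory of the category of all $K$-algebras, and that the honest pullback computed in $K$-algebras, namely the fibre product $P := B\times_A C = \{(b,c)\in B\times C : \varphi(b)=\psi(c)\}$ with its projections $\pi_B,\pi_C$, always exists. Write $\mathbf{CoordRing}_K$ for the former and $\mathbf{Alg}_K$ for the latter. The direction ``$K$-algebra pullback $\Rightarrow$ coordinate-ring pullback'' is the formal one: since $D$ is by hypothesis a coordinate ring and every cone over the cospan whose apex is a coordinate ring is in particular a cone in $\mathbf{Alg}_K$, fullness guarantees that the unique factorisation provided by the universal property in $\mathbf{Alg}_K$ is a morphism of coordinate rings. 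Hence restricting that universal property to coordinate-ring apices immediately yields the pullback property in $\mathbf{CoordRing}_K$.

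For the converse I would use the canonical comparison map. Since $D$ with $\theta_B,\theta_C$ is a cone in $\mathbf{Alg}_K$, the universal property of $P$ furnishes a unique $K$-algebra homomorphism $\eta\colon D\to P$, $\eta(d)=(\theta_B(d),\theta_C(d))$, with $\pi_B\eta=\theta_B$ and $\pi_C\eta=\theta_C$. It suffices to prove that $\eta$ is an isomorphism: once $D\cong P$ as cones, $D$ inherits the $\mathbf{Alg}_K$-pullback property from $P$. The crucial input is Lemma~\ref{L1}, together with the standard fact that the product $B\times C$ of two coordinate rings is again a coordinate ring (it is the coordinate ring of the disjoint union of the two algebraic sets); consequently $P\subseteq B\times C$ has the property that \emph{every} finitely generated $K$-subalgebra of $P$ is a coordinate ring. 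This is what lets me feed small pieces of $P$ into the universal property of $D$, which a priori only sees coordinate rings.

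Concretely, for surjectivity I take $p\in P$, set $S:=K[p]\subseteq P$ (a coordinate ring by Lemma~\ref{L1}), and apply the $\mathbf{CoordRing}_K$-universal property of $D$ to the cone $(S,\pi_B|_S,\pi_C|_S)$ to obtain $v\colon S\to D$; comparing $\eta v$ with the inclusion $S\hookrightarrow P$ through uniqueness for $P$ shows $\eta v$ is that inclusion, so $p=\eta(v(p))$ lies in the image. For injectivity I take $d\in\ker\eta$, so $\theta_B(d)=\theta_C(d)=0$, put $R:=K[d]\subseteq D$ and $S:=\eta(R)\subseteq P$ (both coordinate rings), build $v\colon S\to D$ as before, and observe that $v\circ(\eta|_R)$ and the inclusion $R\hookrightarrow D$ are two coordinate-ring maps inducing the same legs $\theta_B|_R,\theta_C|_R$; uniqueness in the universal property of $D$ forces them to agree, whence $d=v(\eta(d))=v(0)=0$. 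I expect the main obstacle to be precisely this reduction step: the universal property of $D$ quantifies only over coordinate rings, whereas a general test algebra (and the comparison target $P$ itself) need be neither finitely generated nor reduced-and-geometric, so the argument hinges entirely on Lemma~\ref{L1} to certify the finitely generated subalgebras of $P$ as legitimate test objects. Everything else is routine diagram-chasing of the uniqueness clauses.
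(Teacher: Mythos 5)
Your proof is correct, and for the substantive direction it takes a genuinely different route from the paper's. For the easy direction the content is the same: you invoke fullness of the subcategory abstractly, while the paper reaches the same conclusion by noting that if $D$ is the $K$-algebra pullback then $P \cong D$ is finitely generated, hence a coordinate ring by Lemma~\ref{L1}. For the hard direction the paper argues by contradiction: assuming $P$ is not finitely generated, it deduces (via a forward reference to Proposition~\ref{P6}, in substance Proposition~\ref{P5} applied with $\mathrm{Ker}\,\eta_B \cap \mathrm{Ker}\,\eta_C = 0$) that $\mathrm{Im}\,\eta_B$ or $\mathrm{Im}\,\eta_C$ fails to be finitely generated, and then manufactures a finitely generated subalgebra $D' \subseteq P$ --- a coordinate ring by Lemma~\ref{L1} --- whose cone cannot factor through $D$, because $\eta_B(D')$ strictly exceeds $\eta_B(\varrho(D)) = \theta_B(D)$. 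You instead prove directly that the canonical comparison map $\eta\colon D \to P$ is bijective, testing the coordinate-ring universal property of $D$ (both its existence and its uniqueness clause) against the small subalgebras $K[p] \subseteq P$ and $\eta(K[d])$, certified as coordinate rings by Lemma~\ref{L1} together with $B \times C = K[X \sqcup Y]$. Both arguments thus rest on exactly the same two inputs, but yours buys real simplifications: it avoids the case split on whether $P$ is finitely generated, avoids the WLOG step and the dependence on Propositions~\ref{P5}/\ref{P6} (eliminating the paper's forward reference), and produces $D \cong P$ --- hence the finite generation of $P$, the fact actually needed later --- as an explicit conclusion rather than extracting it from a contradiction. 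The one point to pin down in a polished write-up is the degenerate case where a coordinate ring involved is the zero ring (Lemma~\ref{L1} is stated for non-empty algebraic sets), but the paper's own proof is silent on this as well.
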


\begin{proof}
	We know that the pullback of the corresponding diagram in the category of all $K-$algebras is of the form $P = \{(b,c) \in B \times C\,|\, \varphi(b)=\psi(c)\}$ with naturally defined $\eta_B, \eta_C$ projections. However, $B$ and $C$ are coordinate rings of $K-$algebraic varieties $X$ and $Y$, respectively. This implies that $B \times C$ is a coordinate ring of $X \sqcup Y.$ Therefore, if $P$ is finitely generated subalgebra thereof, then is a coordinate ring of an algebraic set over $K$ by Lemma \ref{L1}.
	
	$(\Leftarrow)$ Provided that $P$ is finitely generated as an algebra over $K$, it is also a coordinate ring, and $D \cong P$ by the universal property of pullback.
	
	$(\Rightarrow)$ Suppose that $D$ is not pullback of corresponding diagram in the category of all $K-$algebras. Then, $P$, by the previous paragraph, cannot be finitely generated as an algebra over $K$. By virtue of $P$ being pullback of the diagram in the category of all $K-$algebras, there is a homomorphism $\varrho: D \to P$ such that projections from $D$ to $B$ and $C$ factor through it. Therefore, $\theta_B = \eta_B\rho$ and $\theta_C = \eta_C\rho$. Clearly, this means that $\varrho(D)$ with projections defined as restrictions of $\eta_B$ and $\eta_C$ is also the pullback in the category of finitely generated $K-$algebras. This is due to the universal property of the pullbacks $D$ and $P$ in their respective categories.
	
	As $P$ is not finitely generated, either $\mathrm{Im}\,\eta_B$ or $\mathrm{Im}\,\eta_C$ is not finitely generated (this follows from Proposition \ref{P6} below). Without loss of generality, assume that $\mathrm{Im}\,\eta_B$ is not finitely generated, and find a finitely generated subalgebra $D'$ of $P$ such that $\eta_B(\varrho(D))$ is strictly smaller than $\eta_B(D').$ We deduce by Lemma \ref{L1} that $D'$ is a coordinate ring.
	
	This yields a contradiction as restriction of $\eta_B$ to $D'$ clearly does not factor through $\eta_B$ restricted to $\varrho(D)$ due to $\eta_B(\varrho(D))$ being strictly smaller than $\eta_B(D')$, the assumed pullback of the diagram in the category of coordinate rings.
\end{proof}
	
Therefore, we have successfully translated a geometric question about existence of pushouts of $K-$algebraic sets to the question of whether ring-theoretic pullbacks of induced diagrams of their coordinate rings are finitely generated $K-$algebras.

Below, we study this problem in a general setting of pullbacks of diagrams of communative unital Noetherian rings and finitely generated algebras over them. There are already some results in speacial cases when one of the ring homomorphims is surjective by \cite{Fon} and \cite{BW}.
 
\section{Preliminaries}\label{Sec2}
We begin with some preliminary results and discussion on (pullbacks of) communative unital Noetherian rings and finitely generated algebras over them.

\begin{theorem}[Hilbert basis theorem; Theorem 1.2 and Corollary 1.3 in \cite{Eis}, pages 27 and 28]\label{T3}
	If a commutative unital ring $R$ is Noetherian, then the polynomial ring $R[x]$ is Noetherian. Furthermore, any finitely generated algebra over $R$ is Noetherian.
\end{theorem}

Henceforth, $R$ will denote a Noetherian commutative unital ring. All other rings are commutative unital, unless otherwise indicated.

\begin{theorem}[Eakin-Nagatan theorem in \cite{Nag}; Artin-Tate lemma in \cite{Eis}, Theorem in Exercise 4.32, page 14
]\label{T4}
	Suppose $T \subseteq S$ are rings ($R$-algebras). If $S$ is Noetherian (a finitely generated $R$-algebra) and a finitely generated $T-$module, then $T$ is Noetherian (a finitely generated $R$-algebra). 
\end{theorem}

\begin{proposition}\label{P5}
	Suppose $S$ is a ring (an $R-$algebra) with ideals $I, J \subseteq S$ such that both $S/I$ and $S/J$ are Noetherian (finitely generated algebras over $R$). Then, $S/I \cap J$ is Noetherian (a finitely generated $R-$algebra).
\end{proposition}
\begin{proof} For $S/I$ and $S/J$ Noetherian, the proposition clearly follows from the fact that Noetherian modules are closed under submodules, factors, and extensions.

	Suppose, therefore, that $S/I$ and $S/J$ are Noetherian finitely generated algebras over $R$. Without loss of generality, we can assume that $I \cap J = \{0\}.$ Otherwise, we set $\tilde{S}=S/I \cap J$, $\tilde{I} = I/I \cap J$, and $\tilde{J} = J/I \cap J.$
	
	Let us denote the canonical projections by $\pi_I: S \to S/I$ and $\pi_J: S \to S/J$. If we lift finitely many generators of $S/I$ and $S/J$, we obtain $S_I$ and $S_J$ finitely generated $R-$subalgebras of $S$ such that $\pi_I(S_I)=S/I$ and $\pi_J(S_J)=S/J$, respectively.
	
	Additionally, we have that $I + J/J \cong I/I \cap J \cong I$ as $I \cap J = 0.$ Since $S/J$ is Noetherian, by the Hilbert basis theorem, it can be viewed as a Noetherian $S_J-$module, and $I$ can be viewed as isomorphic to its submodule. We conclude that $I$ is a Noetherian, hence finitely generated $S_J-$module.
	
	Choose an arbitrary $s \in S$. Since $\pi_I(S_I) = S/I$, there exists an $s_I \in S_I$ such that $s - s_I \in I$. However, any element $i \in I$ can be expressed as $i = \sum_{k=1}^n \iota_k s_{k,J}$ for fixed $\iota_1, \dots, \iota_k \in I$ and some $s_{1, J}, \dots, s_{k, J} \in S_J.$	Consequently, $S$ is generated as a $R-$algebra by finitely many generators of $S_I$ and $S_J$ together with finitely many generators of $I$ as a $S_J-$module. 
\end{proof}

\begin{proposition}\label{P6}
	Let $A,B,C$ be Noetherian (finitely generated $R-$algebras), and let $\varphi: B \to A$ a $\psi: C \to A$ be their homomorphisms. Then, the pullback of the corresponding diagram: $$\begin{tikzcd}
			B \arrow{dr}[']{\varphi} & & C \arrow{dl}{\psi}\\
			& A &	
		\end{tikzcd}$$ is Noetherian (a finitely generated $R-$algebra) if and only if both $\varphi^{-1}(\mathrm{Im}\,\varphi \cap \mathrm{Im}\,\psi)$ and $\psi^{-1}(\mathrm{Im}\,\varphi \cap \mathrm{Im}\,\psi)$ are Noetherian (finitely generated as algebras over $R$).
\end{proposition}
\begin{proof}
	Pullback of the diagram above exists in the category of commutative rings, and it can be expressed as $P=\{(y,z) \in B \times C, \varphi(y) = \psi(z)\}$ with projections $\pi_1: P \to B, (y,z) \to y$ and $\pi_2: P \to C, (y,z) \to z$. The ring $P$ can be naturally equipped with $R-$algebra structure such that $\pi_1$ and $\pi_2$ become $R-$algebra homomorphisms. It is clear that $\pi_1(P) = \varphi^{-1}(\mathrm{Im}\,\varphi \cap \mathrm{Im}\,\psi)$ and $\pi_2(P) = \varphi^{-1}(\mathrm{Im}\,\varphi \cap \mathrm{Im}\,\psi)$. Also, we have that $\mathrm{Ker}\,\pi_1 = (0, \mathrm{Ker}\,\psi)$ and $\mathrm{Ker}\,\pi_2 = (\mathrm{Ker}\,\varphi, 0)$.
	
	$(\Rightarrow)$ We observe that $\mathrm{Ker}\,\pi_1 \cap \mathrm{Ker}\,\pi_2 = \{(0,0)\}$. Under the assumption that $\pi_1(P) = \varphi^{-1}(\mathrm{Im}\,\varphi \cap \mathrm{Im}\,\psi)$ and $\pi_2(P) = \varphi^{-1}(\mathrm{Im}\,\varphi \cap \mathrm{Im}\,\psi)$ are Noetherian (finitely generated $R-$algebras), $P$ is Noetherian (a finitely generated as an algebra over $R$) from Proposition \ref{P5}.
	
	$(\Leftarrow)$ If $P$ is Noetherian (a finitely generated $R-$algebra), then both rings ($R$-algebras) $\varphi^{-1}(\mathrm{Im}\,\varphi \cap \mathrm{Im}\,\psi)$ and $\psi^{-1}(\mathrm{Im}\,\varphi \cap \mathrm{Im}\,\psi)$ are Noetherian (finitely generated over $R$) as its homomorphic images under $\pi_1$ and $\pi_2$, respectively.
\end{proof}
	
If both morphims are injective, then the taking the pullback means simply taking the intersection of images, and we are none the wiser. Note, however, that the result of Proposition \ref{P6} is non-trivial whenever one of the morphisms has a non-zero kernel. In such case, we ask whether a subring (subalgebra) of a Noetherian ring (a finitely generated $R-$algebra) that contains a non-trivial ideal of the over-ring is itself Noetherian (finitely generated as an $R-$algebra). It turns out that much more can be said in that case. 

As a result, for instance, we will be able to prove that, under some regularity conditions (if the kernels of $\varphi$ and $\psi$ both contain a non-zero-divisor), the pullback $P$ is Noetherian (finitely generated algebra over $R$) if and only if $B$ and $C$ are finitely generated modules over it (with the $P-$module structure given by the maps $\pi_1$ and $\pi_2$; see Theorem \ref{T20} for more details). Thereby, we generalize main results on pullbacks of Noetherian rings of \cite{Fon} and \cite{BW}.

\section{Subalgebras containing an ideal}\label{Sec3}
In this section, we discuss extensions of rings ($R-$algebras) $S \subseteq B$ rings such that $B$ is Noetherian (finitely generated as an algebra over $R$) and $S$ contains an ideal $I$ of $B$. Our goal is to establish when the subalgebra $S$ is also Noetherian (finitely generated as an algebra over $R$) in this case.

\begin{remark}
	Note that the setting of this section can be also phrased in terms of pullbacks. In the setting as above, the ring ($R-$algebra) $S$ with natural maps is the pullback of the following diagram:
	$$\begin{tikzcd}
		B \arrow{dr}[']{\pi_I} & & S/I \arrow{dl}{\iota}\\
		& B/I &	
	\end{tikzcd}$$ Where $\iota$ is naturally induced from the inclusion $S \subseteq B$.
\end{remark}

\begin{lemma}\label{L7}
	Let $S \subseteq B$ be an extension of rings ($R-$algebras) such that $B$ is Noetherian (finitely generated as an algebra over $R$), and suppose there exists an ideal $I$ of $B$ which lies in $S.$ If $S$ is Noetherian (finitely generated over as an algebra $R$), then $B/\mathrm{Ann}_B(f)$ is a finitely generated $S-$module for each non-zero $f \in I$.
\end{lemma}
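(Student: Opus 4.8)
The plan is to identify $B/\mathrm{Ann}_B(f)$ with a concrete ideal of $S$ and then read off finite generation from the Noetherian hypothesis on $S$. The whole argument hinges on the multiplication-by-$f$ map.

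First I would consider the $B$-linear endomorphism $\mu_f \colon B \to B$ given by $b \mapsto fb$. By definition its kernel is $\mathrm{Ann}_B(f)$, and its image is the principal ideal $fB$, so the first isomorphism theorem furnishes an isomorphism $B/\mathrm{Ann}_B(f) \cong fB$ of $B$-modules. Since $S \subseteq B$, this is in particular an isomorphism of $S$-modules, so it suffices to prove that $fB$ is a finitely generated $S$-module.

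Next I would pin down where $fB$ lives. Because $f \in I$ and $I$ is an ideal of $B$, every product $fb$ lies in $I$, whence $fB \subseteq I \subseteq S$. Moreover $fB$ is stable under multiplication by $S$: for $s \in S \subseteq B$ and $b \in B$ one has $s(fb) = f(sb) \in fB$. Thus $fB$ is not merely a subset of $S$ but an ideal of $S$, and in particular an $S$-submodule of $S$.

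The conclusion is then immediate. In the ring formulation $S$ is Noetherian by hypothesis; in the $R$-algebra formulation $S$ is a finitely generated $R$-algebra and hence Noetherian by the Hilbert basis theorem (Theorem \ref{T3}). Either way every ideal of $S$ is finitely generated as an $S$-module, so $fB$ is, and therefore so is the isomorphic module $B/\mathrm{Ann}_B(f)$. The only steps demanding genuine care are verifying that $\mu_f$ induces an $S$-linear (not just additive) isomorphism and that $fB$ is an honest $S$-submodule of $S$; both rest only on the inclusion $S \subseteq B$ together with the ideal property of $I$, so I expect no real obstacle beyond assembling these observations in the right order.
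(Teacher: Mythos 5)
Your proposal is correct and takes essentially the same route as the paper: both proofs hinge on the observation that $fB \subseteq I \subseteq S$ is an ideal of $S$, which is Noetherian (in the $R$-algebra case via Theorem \ref{T3}), hence finitely generated as an $S$-module, and that this finite generation transfers to $B/\mathrm{Ann}_B(f)$. The paper merely unwinds your appeal to the first isomorphism theorem for $\mu_f$ by hand: it writes $(f)_B = (fb_1,\dots,fb_k)_S$ and checks directly that $b_1+\mathrm{Ann}_B(f),\dots,b_k+\mathrm{Ann}_B(f)$ generate $B/\mathrm{Ann}_B(f)$ as an $S$-module.
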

\begin{proof} For each non-zero $f \in I$, we have $(f)_B \subseteq I \subseteq S$. As an ideal of a Noetherian ring $S$, $(f)_B$ has to be finitely generated. Consequently, there exist $b_1, \dots, b_k \in B$ such that $(f)_{B} = (fb_1, \dots, fb_k)_{S}.$ Choose an arbitrary $b \in B$; we can find $s_1, \dots, s_k \in S$ such that $fb = \sum_{i=1}^k fb_is_i.$ We deduce that $b - \sum_{i=1}^k b_is_i \in \mathrm{Ann}_B(f)$, and, thus, $b_1 + \mathrm{Ann}_B(f), \dots, b_k + \mathrm{Ann}_B(f)$ generate $B/\mathrm{Ann}_B(f)$ as an $S-$module.
\end{proof}

Now, we show that the result of the previous lemma can be strengthened considerably under additional assumptions, specifically, if the ring is coprimary as a module over itself.

\begin{definition}[Prime ideals associated to a module and coprimary module; defined on pages 89 and 94 in \cite{Eis}] Let $M$ be an $R-$module. A prime ideal $\mathfrak{p} \in \mathrm{Spec}\,R$ is associated to $M$ if $\mathfrak{p}$ is the annihilator of an element of $M.$ A submodule $N$ of $M$ is primary if only one prime is associated to $M/N$. We say that $M$ is coprimary module if its zero submodule is primary.
\end{definition}

\begin{lemma}\label{L9}
 	Suppose $B$ is a coprimary and finitely generated $R-$algebra. Then all elements of the only associated prime of $B$ are nilpotent.	
\end{lemma}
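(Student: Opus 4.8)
The plan is to identify the unique associated prime $\mathfrak{p}$ of $B$ with the nilradical $\sqrt{0}$ of $B$; since the nilradical is exactly the set of nilpotent elements, this immediately yields the claim. First I would record that $B$ is Noetherian: as $R$ is Noetherian and $B$ is a finitely generated $R$-algebra, this is the Hilbert basis theorem (Theorem~\ref{T3}). The inclusion $\sqrt{0} \subseteq \mathfrak{p}$ is free, the nilradical being contained in every prime ideal; the whole content of the lemma is the reverse inclusion $\mathfrak{p} \subseteq \sqrt{0}$, i.e. that every $a \in \mathfrak{p}$ is nilpotent.

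To prove $\mathfrak{p} \subseteq \sqrt{0}$, I would argue by contradiction. Fix $a \in \mathfrak{p}$ and suppose $a^n \neq 0$ for all $n$. Since $B$ is Noetherian, the ascending chain $\mathrm{Ann}_B(a) \subseteq \mathrm{Ann}_B(a^2) \subseteq \cdots$ stabilises, say $K = \mathrm{Ann}_B(a^n) = \mathrm{Ann}_B(a^{n+1})$ for all sufficiently large $n$. The key observation is that on the nonzero submodule $a^n B \subseteq B$ multiplication by $a$ is injective: if $a \cdot (a^n x) = 0$, then $x \in \mathrm{Ann}_B(a^{n+1}) = \mathrm{Ann}_B(a^n) = K$, whence $a^n x = 0$. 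Thus $a$ is a nonzerodivisor on $a^n B$.

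On the other hand, $a^n B$ is a nonzero finitely generated module over the Noetherian ring $B$, so it has at least one associated prime, and every associated prime of a submodule of $B$ is an associated prime of $B$ itself; since $B$ is coprimary the only candidate is $\mathfrak{p}$, so $\mathfrak{p} \in \mathrm{Ass}(a^n B)$. Writing $\mathfrak{p} = \mathrm{Ann}_B(m)$ for some nonzero $m \in a^n B$ and recalling $a \in \mathfrak{p}$, we obtain $a m = 0$ with $m \neq 0$, contradicting that $a$ is a nonzerodivisor on $a^n B$. Hence $a$ must be nilpotent.

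The step I expect to be the main obstacle is the interplay of associated primes with the submodule $a^n B$: verifying the nonemptiness of $\mathrm{Ass}(a^n B)$ and the containment $\mathrm{Ass}(a^n B) \subseteq \mathrm{Ass}(B)$, both of which rest on standard properties of associated primes over a Noetherian ring (a maximal element of the set of annihilators of nonzero elements is prime, and annihilators are inherited by the ambient module; see \cite{Eis}). A more conceptual alternative would invoke directly that in a Noetherian ring the primes minimal over $\mathrm{Ann}_B(B) = 0$ are associated; coprimarity then forces $\mathfrak{p}$ to be the unique minimal prime of $B$, and $\sqrt{0}$, being the intersection of the minimal primes, equals $\mathfrak{p}$ at once. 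I would present the self-contained argument above and remark on this shortcut.
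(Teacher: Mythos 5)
Your proof is correct, but it takes a different route from the paper: the paper's entire proof is a one-line citation of Proposition 3.9 on page 94 of \cite{Eis}, which states that a coprimary module $M$ over a Noetherian ring has $\mathfrak{p} = \sqrt{\mathrm{Ann}\,M}$ for its unique associated prime; applied to $B$ as a module over itself (so $\mathrm{Ann}_B(B) = 0$) this gives $\mathfrak{p} = \sqrt{0}$ at once. The ``conceptual alternative'' you sketch in your final paragraph --- minimal primes over $\mathrm{Ann}_B(B)=0$ are associated, coprimarity forces $\mathfrak{p}$ to be the unique minimal prime, hence $\mathfrak{p} = \sqrt{0}$ --- is essentially the content of the cited result, so that shortcut \emph{is} the paper's proof. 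Your main argument, by contrast, reproves the fact from scratch: you correctly reduce to $\mathfrak{p} \subseteq \sqrt{0}$, stabilise the chain $\mathrm{Ann}_B(a) \subseteq \mathrm{Ann}_B(a^2) \subseteq \cdots$ using Noetherianity (which you rightly extract from Theorem \ref{T3}), deduce that $a$ acts injectively on the nonzero submodule $a^nB$, and then use the two standard facts (nonzero modules over Noetherian rings have associated primes; associated primes of submodules are associated primes of the ambient module) to locate $\mathfrak{p} \in \mathrm{Ass}(a^nB)$ and derive the contradiction $am = 0$ with $m \neq 0$. All steps check out; the only implicit hypothesis is $B \neq 0$, which is forced by coprimarity since $\mathrm{Ass}(0) = \emptyset$. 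What each approach buys: the citation keeps the paper short, while your argument makes the lemma self-contained and exposes exactly which ingredients are needed --- Noetherianity plus elementary associated-prime theory --- at the cost of rederiving material that \cite{Eis} packages as Proposition 3.9.
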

\begin{proof} The lemma follows easily from Proposition 3.9 on page 94 in \cite{Eis}.
\end{proof}

\begin{proposition}\label{P10}
 	Suppose $S \subseteq B$ is an extension of rings ($R-$algebras) such that there exists a finitely generated ideal $I \subseteq B$. If $B/I$ is finitely generated as a $B-$module and, when viewed as a ring, all its elements are nilpotent, then $B$ is a finitely generated $S-$module. 
\end{proposition}
\begin{proof} At first, we use the fact that $I$ is finitely generated and all its elements are nilpotent to find the smallest $m \in \mathbb{N}$ such that $I^m = 0$.

	Next, we proceed by induction on $m$. Let $m = 2$; otherwise, it is trivial. Let $b_1, \dots, b_n \in B$ be such that $b_i + I$ generate $B/I$ as an $S-$module and $i_1, \dots, i_\ell$ generate $I$ as an ideal in $B$. For each $b \in B$, there are $s_1(b), \dots, s_n(b)$ such that $b - \sum_i s_i(b)b_i \in I$. Also, for each $i \in I$, there are $c_1(i), \dots, c_\ell(i) \in B$ such that $i = \sum_j c_j(i) i_j$. This allows us to write $i = \sum_j \sum_k s_k(c_j) b_k i_j$ since we have that $I^2 = 0$. Thus, elements $b_i$ and $b_j i_k$ generate $B$ as a finitely generated $S$-module.

	The induction goes for $m$ as follows: we use the step $m-1$ to prove that $B/I^m$ is a finitely generated $S-$module, and we set $\tilde{I} = I/I^m$ and $\tilde{B} = B/I^m$; subsequently, we use the step $m = 2$ for $\tilde{I} = I^{m-1}$ and $\tilde{B} = B$.
\end{proof}

The results of this section up to this point can be put together in the following theorem:

\begin{theorem}\label{T11}
	Let $S \subseteq B$ be an extension of rings ($R-$algebras) such that $B$ is Noetherian (finitely generated over $R$) and coprimary as a module over itself. Moreover, suppose that there exists a non-zero ideal $I$ of $B$ which lies in $S.$ Then, $S$ is Noetherian (finitely generated over $R$) if and only if $B$ is a finitely generated $S-$module.
\end{theorem}
\begin{proof}
  $(\Leftarrow)$ This implication follows trivially from Theorem \ref{T4}.
  
  $(\Rightarrow)$ Let $\mathfrak{p} \in \mathrm{Spec}\,B$ be the only associated prime of $B$; by definition, we know that $\mathrm{Ann}_B(b) \subseteq \mathfrak{p}$ for each $b \in B$ since elements in $\mathrm{Ann}_B(b)$ are zerodivisors on $B$ (Theorem 3.1 in \cite{Eis}).
   
  Since $I$ contains a non-zero element $f$, by Lemma \ref{L7}, $B/\mathrm{Ann}_B(f)$ is a finitely generated $S-$module. It follows obviously that $B/\mathfrak{p}$ is also a finitely generated $S-$module.
  
  As $B$ is Noetherian, $\mathfrak{p}$ is a finitely generated $B-$module and by Lemma \ref{L9} all its elements are nilpotent. We can now directly apply Proposition \ref{P10} to obtain that $B$ is a finitely generated $S-$module.
\end{proof}

The theorem above can be readily applied to some special cases of pushouts:

\begin{corollary}
	Let $B$ and $T$ be Noetherian (finitely generated $R-$algebras) such that $B$ is coprimary as a module over itself. Let $I$ be a non-trivial ideal of $B$. Then, the pullback of the corresponding diagram:
	$$\begin{tikzcd}
		B \arrow{dr}[']{\pi_I} & & T \arrow{dl}{\varphi}\\
		& B/I &	
	\end{tikzcd}$$ is Noetherian (a finitely generated $R-$algebra) if and only if $B/I$ is finitely generated as a $T-$module. 
\end{corollary}
\begin{proof}
	Denote the pullback $P$ and the respective homomorphisms $\pi_B$ and $\pi_T$. We begin by observing that, since $\pi_I$ is onto, $\pi_T$ is onto. By Proposition \ref{P6}, $P$ is Noetherian (finitely generated as an $R-$algebra) if and only if so is $\pi_B(P)$.
	
	Clearly, $\pi_B(P)$ contains the ideal $I$. This allows us to apply Theorem \ref{T11}. Therefore, $\pi_B(P)$ is Noetherian (a finitely generated $R$-algebra) if and only if $B$ is a finitely generated $\pi_B(P)$ module. Since $I \subseteq \pi_B(P)$, we observe that $\pi_B(P)$ is Noetherian (a finitely generated $R-$algebra) if and only if $B/I$ is finitely generated as a $\pi_B(P)-$module. Using the fact that $\pi_I(\pi_B(P)) = \varphi(\pi_T(P)) = \varphi(T)$, we conclude the proof.
\end{proof}

It is possible to extend the result of the previous theorem to arbitrary Noetherian rings (finitely generated $R-$algebras) by using the primary decomposition of its zero ideal.

\begin{theorem}[Lasker-Noether theorem or primary decomposition; Theorem 3.10 in \cite{Eis}, page 95]\label{T12}
	Let $M$ be a finitely generated $R-$module. Any proper submodule $M'$ of $M$ is the $[\mathrm{finite}]$ intersection of primary submodules.
\end{theorem}

\begin{theorem}\label{T13}
	Assume that $S \subseteq B$ are rings ($R-$algebras) such that $B$ is Noetherian (finitely generated as an algebra over $R$). Moreover, assume there exists an ideal $I$ of $B$ which lies in $S.$ Let $P_1, \dots, P_n$ be a primary decomposition of the zero ideal in $B$ such that $I \subseteq P_i$ for all $1 \leq i \leq n'$ and that $I \nsubseteq P_j$ for every $n'+1 \leq j \leq n.$ Then, $S$ is Noetherian (finitely generated as algebra over $R$) if and only if $S/I$ is Noetherian (a finitely generated $R-$algebra) and $B/P_j$ is a finitely generated $S+P_j/P_j-$module for every $n'+1 \leq j \leq n.$
\end{theorem}
\begin{proof}
  $(\Leftarrow)$ This implication follows immediately because $S/I$ is Noetherian (finitely generated $R-$algebra) as a homomorphic image of the Noetherian (finitely generated $R-$algebra) $S$, and so are $S+P_j/P_j$ for every $n'+1 \leq j \leq n.$ Since $I \nsubseteq P_j$ for every $n'+1 \leq j \leq n$, then, $I+P_j/P_j$ is non-zero for all $j$. Therefore, using Theorem \ref{T11}, $B/P_j$ is a finitely generated $S+P_j/P_j-$module for all $j$.
  
	$(\Rightarrow)$ We know that $I \subseteq P_i$ for all $1 \leq i \leq n'$ and that $S/I$ is Noetherian (a finitely generated algebra over $R$). Thus, $S+P_i/P_i\cong S/P_i \cap I$ is also Noetherian (finitely generated $R-$algebras) for all $1 \leq i \leq n'$. As well, we have that $B/P_j$ is a finitely generated $S+P_j/P_j-$module for every $n'+1 \leq j \leq n$. By Theorem \ref{T4} and \ref{T11}, $S+P_j/P_j \cong S/P_j \cap I$ is Noetherian (a finitely generated $R-$algebra) for every $n'+1 \leq j \leq n$. By virtue of the choice of $P_1, \dots, P_n$ we have that $\bigcap_{k=1}^n P_k = \{0\}$; specifically, we get $\bigcap_{k=1}^n (I \cap P_k) = \{0\}.$ If we inductively apply Proposition \ref{P5}, we obtain that $S$ is Noetherian (a finitely generated $R-$algebra).
\end{proof}

\section{Intersections of Noetherian subrings}\label{Sec4}
In the preceding section, we dealt with the problem whether a subring (subalgebra) of a Noetherian ring (finitely generated algebra) containing its ideal is Noetherian (finitely generated as an $R$-algebra). We formulated the main result in Theorem \ref{T13}.

However, the strength of this theorem is limited. Generally, we can replace the condition of subring being Noetherian (subalgebra being finitely generated $R-$algebra) by the condition of the ring ($R$-algebra) being module-finite over the subring (subalgebra) only for some of the primary components of the algebra. In effect, it forces us to assume that some quotient of the subring (subalgebra) in question is Noetherian (finitely generated over $R$).

To illustrate this point, let us go back to the pullback discussed in the context of Proposition \ref{P6}. We have Noetherian rings (algebras finitely generated over $R$) $A,B,C$ and $\varphi: B \to A$ a $\psi: C \to A$ homomorphisms between them, and we ask whether the pullback of this diagram is Noetherian (finitely generated over $R$). 

To this end, we would like to know it about $S = \varphi^{-1}(\mathrm{Im}\,\varphi \cap \mathrm{Im}\,\psi)$. If we were to use Theorem \ref{T13}, we would encounter a problem. Typically, we need not to know anything of $S/I = \mathrm{Im}\,\varphi \cap \mathrm{Im}\,\psi$ where $I = \mathrm{Ker}\,\varphi$, which is required to be Noetherian (finitely generated as an $R$-algebra) in the Theorem \ref{T13}.

Therefore determining if the pullback of this diagram:
$$\begin{tikzcd}
	B \arrow{dr}[']{\varphi}& & C \arrow{dl}{\psi}\\
	& A &	
\end{tikzcd}$$ is Noetherian (finitely generated over $R$) using the tools of previous section, generally, requires knowing that the pullback of this diagram: 
$$\begin{tikzcd}
\mathrm{Im}\,\varphi \arrow{dr}[']{\subseteq}& & \mathrm{Im}\,\psi \arrow{dl}{\subseteq}\\
& A &	
\end{tikzcd}$$ which is isomorphic $\mathrm{Im}\,\varphi \cap \mathrm{Im}\,\psi$ is also Noetherian (finitely generated over $R$). 

In general, it is a daunting task to say whether an intersection of two Noetherian subrings of a ring is Noetherian. This is evidenced by many counterexamples listed, for example, in \cite{Bay} and \cite{Mon}. Nonetheless, under favourable circumstances, it is possible to prove that an intersection of two Noetherian subrings (finitely generated $R-$subalgberas) is Noetherian (finitely generated as an algebra over $R$).

At first, we note that the result of Theorem \ref{T13} can actually be used to establish some facts on intersections of subrings (algebras) of a ring containing an ideal of some well-behaved overring. Before getting to that result, we need to formulate some lemmas.

\begin{lemma}\label{L14}
	Assume that $S \subseteq B$ are $R-$algebras and that $I \subseteq B$ is an ideal of $B$ which is contained in $S$. Then, $B$ is a finitely generated $S-$module if and only if $B/I$ is a finitely generated $S-$module.
\end{lemma}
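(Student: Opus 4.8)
The plan is to treat the two implications separately, the forward one being purely formal and the reverse one being where the hypothesis $I \subseteq S$ actually does the work. As a preliminary observation I would note that $I$, being an ideal of $B$ that happens to lie in $S$, is in particular an $S$-submodule of $B$, so that $0 \to I \to B \to B/I \to 0$ is a short exact sequence of $S$-modules and all the manipulations below make sense.

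For the direction $(\Rightarrow)$ I would simply use that $B/I$ is a homomorphic image of $B$ as an $S$-module, and that a quotient of a finitely generated module is finitely generated; so if $B$ is a finitely generated $S$-module then so is $B/I$. The hypothesis $I \subseteq S$ is not needed here.

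For the converse $(\Leftarrow)$, suppose $B/I$ is generated as an $S$-module by the classes $\overline{b_1}, \dots, \overline{b_n}$ of elements $b_1, \dots, b_n \in B$. The plan is to show that $b_1, \dots, b_n$ together with $1$ generate $B$ as an $S$-module. Given $b \in B$, I would write $\overline{b} = \sum_{i=1}^n s_i \overline{b_i}$ with $s_i \in S$, so that $b - \sum_{i=1}^n s_i b_i \in I$. The crucial point is that $I \subseteq S$, hence this difference is an element $s_0 \in S$, and therefore $b = s_0 \cdot 1 + \sum_{i=1}^n s_i b_i$ lies in the $S$-submodule generated by $1, b_1, \dots, b_n$.

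I would emphasize exactly where the hypotheses enter, since the apparent difficulty is a false one. The only role of $I \subseteq S$ is to let me replace the (possibly very large, possibly non-finitely-generated) $S$-submodule $I$ by the single cyclic module $S\cdot 1$; at no point do I need $I$ to be finitely generated as an $S$-module. Consequently no Noetherian hypothesis on $S$, $B$, or $I$ is required, which is what makes the lemma flexible enough for the intersection arguments that follow. The step I expect to need the most care in the exposition is precisely this observation $I \subseteq S \cdot 1$, since a reader might reflexively try to prove $I$ finitely generated and thereby import an unnecessary Noetherian assumption; once the cyclic bound is stated, both implications are immediate.
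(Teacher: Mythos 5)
Your proof is correct; the paper in fact states Lemma \ref{L14} without any proof, evidently regarding it as immediate, and your argument (quotients of finitely generated modules are finitely generated for $(\Rightarrow)$; lifting generators and absorbing the error term via $I \subseteq S = S\cdot 1$ for $(\Leftarrow)$) is exactly the evident one the author must have had in mind. Your remark that no finite generation or Noetherian hypothesis on $I$ is needed is accurate and consistent with how the lemma is used later, e.g.\ in the proof of Theorem \ref{T17}.
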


\begin{lemma}\label{L15}
	Suppose that $M$ is an $R-$module with submodules $M_1, \dots, M_n$ such that $M/M_i$ is Noetherian for all $1 \leq i \leq n$; then $M/M_1 \cap \dots \cap M_n$ is also Noetherian.
\end{lemma}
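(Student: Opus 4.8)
The plan is to prove Lemma \ref{L15} by a straightforward induction on $n$, reducing everything to the two-module case $n=2$, and then handle $n=2$ by exhibiting an explicit embedding of $M/(M_1 \cap M_2)$ into a Noetherian module. First I would observe that the base case $n=1$ is vacuous, so the content lies entirely in combining two submodules at a time; once I establish that $M/(M_1 \cap M_2)$ is Noetherian whenever $M/M_1$ and $M/M_2$ are, I can set $N = M_1 \cap \dots \cap M_{n-1}$, apply the inductive hypothesis to conclude $M/N$ is Noetherian, and then apply the two-module case to $N$ and $M_n$ to get that $M/(N \cap M_n) = M/(M_1 \cap \dots \cap M_n)$ is Noetherian.

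For the crucial two-module step, the key idea is the natural $R$-module homomorphism
\[
\Phi \colon M/(M_1 \cap M_2) \longrightarrow (M/M_1) \oplus (M/M_2), \qquad m + (M_1 \cap M_2) \longmapsto (m + M_1,\, m + M_2).
\]
I would check that $\Phi$ is well-defined and injective: its kernel consists of those $m$ lying in both $M_1$ and $M_2$, which is exactly $M_1 \cap M_2$, so $\Phi$ is a monomorphism. The target $(M/M_1) \oplus (M/M_2)$ is a finite direct sum of Noetherian modules and is therefore Noetherian, since Noetherian modules are closed under finite direct sums (equivalently, under extensions, which is the same closure property invoked in the proof of Proposition \ref{P5}). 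A submodule of a Noetherian module is Noetherian, so $\Phi\bigl(M/(M_1 \cap M_2)\bigr) \cong M/(M_1 \cap M_2)$ is Noetherian, completing the step.

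I do not expect any serious obstacle here; the statement is essentially the module-theoretic analogue of the Chinese-remainder-style embedding and rests only on the standard closure properties of Noetherian modules under submodules, finite direct sums, and isomorphism. The only point requiring mild care is the bookkeeping in the inductive step, namely confirming that the inductive hypothesis applies to the $(n-1)$-fold intersection $N = M_1 \cap \dots \cap M_{n-1}$ (each $M/M_i$ for $i \leq n-1$ is Noetherian by assumption, so $M/N$ is Noetherian by induction) and that $M/M_n$ remains Noetherian so the two-module case can be invoked for $N$ and $M_n$. Everything else is routine.
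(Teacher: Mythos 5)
Your proof is correct; in fact the paper states Lemma~\ref{L15} without any proof at all, treating it as routine, and your diagonal embedding $M/(M_1 \cap M_2) \hookrightarrow (M/M_1) \oplus (M/M_2)$ together with closure of Noetherian modules under submodules and finite direct sums is precisely the standard argument the paper implicitly relies on (the same closure properties it invokes in the proof of Proposition~\ref{P5}). The only remark worth making is that the induction is dispensable: the map $m + \bigcap_i M_i \mapsto (m+M_1, \dots, m+M_n)$ embeds $M/(M_1 \cap \dots \cap M_n)$ into $\bigoplus_{i=1}^n M/M_i$ in one step.
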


\begin{remark}
	The lemmas above already allow us to prove that, under some assumptions an intersection of two Noetherian rings (finitely generated $R$-algebras) is also Noetherian (finitely generated as an algebra over $R$). Let $S \subseteq B$ be rings ($R-$algebras) such that $B$ is Noetherian (finitely generated as an algebra over $R$). Also, let $I,J \subseteq B$ be ideals of $B$. If $B$ is a finitely generated as an $S+I-$module and as an $S+J-$module, $B$ is a finitely generated as an $S+(I \cap J)-$module and as an $(S+I) \cap (S+J)-$module. Furthermore, all of these rings ($R-$algebras) are Noetherian (finitely generated).
	
	This claim can be proved very easily. Since $S+(I \cap J) \subseteq (S+I) \cap (S+J),$ it suffices to prove that $B$ is a finitely generated  $S+(I \cap J)-$module if it is finitely generated as an $S+I-$module and an $S+J-$module.
	
	Clearly, $I$ and $J$ are $S+(I \cap J)-$modules. However, by Lemma \ref{L14}, we know that $B/I$ and $B/J$ are finitely generated modules over $S+I$ and $S+J$, respectively. As $S+(I \cap J)+I/I \cong S+I/I$ and as a similar statement holds for $J$, $S+(I \cap J)$ acts on $B/I$ and $B/J$ in the same way as $S+I$ and $S+J$, respectively. Hence, $B/I$ and $B/J$ are also finitely generated $S+(I \cap J)-$modules. By Lemma \ref{L15}, $B/I \cap J$ is a finitely generated $S+(I \cap J)-$module. Another usage of Lemma \ref{L15}, consequently, establishes that $B$ is a finitely generated $S+(I \cap J)-$module.
	
	All of the said rings ($R-$algebras) are obviously Noetherian (finitely generated) using Theorem \ref{T4}.
\end{remark}

In some sense, we might think of this result as of an analogue of Theorem \ref{T11}. This begs a natural question of whether it is possible to prove something corresponing to Theorem \ref{T13}. It turns out that, with the following lemma at hand, it is indeed the case.

\begin{lemma}\label{L16}
	Let $B$ be a commutative ring, and let $I,J,P$ be its ideals. Then, the following holds\: $$\sqrt{(I \cap J) + P} = \sqrt{(I+P) \cap (J + P)}.$$ 
\end{lemma}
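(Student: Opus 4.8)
The plan is to prove the two inclusions separately, with the interesting direction relying on the standard radical identity $\sqrt{KL} = \sqrt{K \cap L}$ to convert an intersection into a product that expands conveniently.

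For the inclusion $\sqrt{(I \cap J) + P} \subseteq \sqrt{(I+P) \cap (J+P)}$, I would first observe that the containment already holds before taking radicals: any element of $(I \cap J) + P$ has the form $a + p$ with $a \in I \cap J$ and $p \in P$, and since $a \in I$ and $a \in J$ this lies in both $I + P$ and $J + P$. Hence $(I \cap J) + P \subseteq (I+P) \cap (J+P)$, and monotonicity of the radical yields the desired inclusion immediately.

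For the reverse inclusion, the key step is to recall that for any two ideals $K, L$ of $B$ one has $\sqrt{KL} = \sqrt{K \cap L}$, the nontrivial direction coming from the implication $x^n \in K \cap L \Rightarrow x^{2n} = x^n x^n \in KL$. Applying this with $K = I + P$ and $L = J + P$ gives $\sqrt{(I+P) \cap (J+P)} = \sqrt{(I+P)(J+P)}$. I would then expand the product,
$$(I+P)(J+P) = IJ + IP + JP + P^2 \subseteq IJ + P \subseteq (I \cap J) + P,$$
using $IP, JP, P^2 \subseteq P$ together with $IJ \subseteq I \cap J$. Monotonicity of the radical then gives $\sqrt{(I+P)(J+P)} \subseteq \sqrt{(I \cap J) + P}$, and combining this with the identity above completes the reverse inclusion.

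Putting the two inclusions together yields the claimed equality. I do not anticipate any serious obstacle: both directions reduce to monotonicity of the radical applied to elementary containments of ideals, and the only ingredient beyond pure set manipulation is the product--intersection radical identity $\sqrt{KL} = \sqrt{K \cap L}$. The one point that requires care is the decision to linearize $(I+P)\cap(J+P)$ as a product rather than arguing elementwise on radicals, since it is precisely the distribution of the product over the sum that makes the term $IJ$ (which sits inside $I \cap J$) surface and forces the remaining summands into $P$.
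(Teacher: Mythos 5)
Your proof is correct, but it takes a genuinely different route from the paper. The paper argues geometrically: it passes to the Zariski topology on $\mathrm{Spec}\,B$ and computes $V((I \cap J) + P) = (V(I) \cup V(J)) \cap V(P) = (V(I+P)) \cup (V(J+P)) = V((I+P) \cap (J+P))$, so the identity is just distributivity of union over intersection of closed sets, with the conclusion on radicals following from the fact that two ideals have the same radical iff they have the same vanishing locus (i.e.\ $\sqrt{I} = \bigcap_{\mathfrak{p} \supseteq I} \mathfrak{p}$). You instead stay entirely on the ideal-theoretic side: the easy containment $(I \cap J) + P \subseteq (I+P) \cap (J+P)$ handles one direction, and for the other you linearize the intersection via $\sqrt{KL} = \sqrt{K \cap L}$ and expand $(I+P)(J+P) \subseteq IJ + P \subseteq (I \cap J) + P$; all steps check out, including the key observation $x^n \in K \cap L \Rightarrow x^{2n} \in KL$. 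What each approach buys: the paper's argument is a one-line computation once the $V(\cdot)$ dictionary is in place, but it silently invokes the characterization of the radical as an intersection of primes (hence Zorn's lemma); your argument is more elementary and effective, needing nothing beyond monotonicity of the radical and manipulation of ideal products, and your closing remark correctly identifies the crux --- replacing the intersection by the product is exactly what makes the term $IJ \subseteq I \cap J$ appear while the cross terms are absorbed into $P$.
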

\begin{proof}
	This can be proven using basic properties of the Zariski topology on $\mathrm{Spec}\,B$. Simply write: $V((I \cap J) + P) = V(I \cap J) \cap V(P) = (V(I) \cup V(J)) \cap V(P) = (V(I) \cap V(P)) \cup (V(J) \cap V(P)) = V(I+P) \cup V(J+P) = V((I+P) \cup (J+P)).$
\end{proof}

\begin{theorem}\label{T17}
	Let $S \subseteq B$ be rings ($R-$algebras) such that $B$ is Noetherian (finitely generated as an algebra over $R$). Also, let $I,J \subseteq B$ be ideals of $B$. If both $S+I$ and $S+J$ are Noetherian (finitely generated $R-$algebras), then $S + (I \cap J)$ and $(S+I) \cap (S+J)$ are Noetherian (finitely generated $R-$algebras).
\end{theorem}
\begin{proof}
	We know that $S+I/I$ and $S+J/J$ are Noetherian (finitely generated $R-$algebras) and that $S+(I \cap J) + I = (S+I) \cap (S+J) + I = S+I$ (a similar thing holds for $J$). As in the remark above, therefore, $S + (I \cap J) + I/I \cong S + I/I$ (and similarly for $(S + I) \cap (S + J)$ instead of $S + (I \cap J)$ and $I$ instead of $I$). By Proposition \ref{P5}, we have that $S+(I \cap J)/I \cap J$ and $(S+I) \cap (S+J)/I \cap J$ are Noetherian (finitely generated algebras over $R$).
	
	Let $P_1, \dots, P_n$ form a primary decomposition of the zero ideal in $B$ thought of as a module over itself. Without loss of generality, we can assume that for all $1 \leq i \leq n'$, we have $I \cap J \subseteq P_i$ and that for $n'+1 \leq j \leq n$, $I \cap J \nsubseteq P_J$ holds. We show that $B/P_j$ is a finitely generated $S+ (I \cap J)-$module for each $n'+1 \leq j \leq n.$ By Lemma \ref{L14}, it suffices to prove that $B/(I \cap J) + P_j$ is a finitely generated module.
	
	We know that the nilradical of $B/(I \cap J) + P_j$ is $\sqrt{(I \cap J) + P_j}/(I \cap J) + P_j.$ We recall that $(I \cap J) + P_j \subseteq (I + P_j) \cap (J + P_j)$ and $\sqrt{(I + P_j) \cap (J + P_j)} = \sqrt{(I \cap J) + P_j}$, which we proved in Lemma \ref{L16}. Therefore, $(I + P_j) \cap (J + P_j)/(I \cap J) + P_j$ is, specifically, a finitely generated nilpotent ideal of $B/(I \cap J) + P_j.$
	
	From Theorem \ref{T13}, Lemma \ref{L14}, and the fact that both $S+I$ and $S+J$ are Noetherian (finitely generated $R-$algebras), we deduce that $B/(I + P_j)$ and $B/(J+P_j)$ are finitely generated as $S-$modules (with the $S-$modules structures given by the action of $(S+I) + P_j/(I + P_j)$ and of $(S+J) + P_j/(J + P_j)$, respectively). Whereas, an application of Proposition \ref{P5} gives us that $B/(I + P_j) \cap (J + P_j)$ is a finitely generated module over $S$. Finally, this means that $B/(I \cap J) + P_j$ is a finitely generated as $S-$module by Proposition \ref{P10}.
	
	Now, we have that $B/P_j$ is a finitely generated $S+(I \cap J)-$module. As $S+(I \cap J) \subseteq (S+I) \cap (S+J)$, $B/P_j$ is \textit{a fortiori} a finitely generated as a module over $(S+I) \cap (S+J).$  Invoking Theorem \ref{T13} once more, we obtain the desired result: the rings ($R-$algebras) $S+(I \cap J)$ and $(S+I) \cap (S+J)$ are Noetherian (finitely generated).
\end{proof}

Second, we give an example of a combinatorial nature. This example is also more specific in a sense that we restrict ourselves to finitely generated algebras over fields.

\begin{proposition}\label{P18}
	Let $K$ be a field, and $S_1, S_2 \subseteq B$ be finitely generated subalgebras of a $K-$algebra $B$. Moreover, let $B$ be a domain. Then, $S_1 \cap S_2$ is a finitely generated $K-$algebra if and only if there are generators $g_1, \dots, g_m$ and $h_1, \dots, h_n$ of $S_1$ and $S_2$, respectively, such that  $S_1 \cap S_2$ as a vector space over $K$ is generated by the intersection of multiplicative sets generated by $g_1, \dots, g_m$ and $h_1, \dots, h_n$, respectively.
\end{proposition}
\begin{proof}
	$(\Rightarrow)$ Assume that $S_1 \cap S_2$ is a finitely generated $K-$algebra with generators $f_1, \dots, f_k$. Also, we assume that $g_1, \dots, g_m$ and $h_1, \dots, h_n$ are generators of $S_1$ and $S_2$, respectively. Then the claim clearly holds for the intersection of multiplicative sets generated by $1, f_1, \dots, f_k$, $g_1, \dots, g_m$ and $1, f_1, \dots, f_k, h_1, \dots, h_n$, respectively.

	$(\Leftarrow)$ Suppose that $g_1, \dots, g_m$ and $h_1, \dots, h_n$ are generators of $S_1$ and $S_2$, respectively and that $M \cap (S_1 \cap S_2)$ generates $S_1 \cap S_2$ a vector space over $K$. Here, $M$ denotes the intersection of multiplicative sets generated by $g_1, \dots, g_m$ and $h_1, \dots, h_n$, respectively. In other words, every element of $S_1 \cap S_2$ is a $K-$linear combination of elements of $M$.

	To prove that $S_1 \cap S_2$ is finitely generated, it suffices to show that there are finitely many elements of $M$ in $S_1 \cap S_2$ such that any element of $M$ in $S_1 \cap S_2$ is a product of their powers.

	The set $M$ is clearly surjective image of the following set: $$\mathfrak{I} = \{(a_1, \dots, a_m, b_1, \dots, b_n) \in \mathbb{N}_{0}^{n+m}: g_1^{a_1} \dots, g_m^{a_m} = h_1^{b_1} \dots h_n^{b_n}\}$$ under the map that sends the $(m+n)-$tuple of $\mathfrak{I}$ to the product of $g_1, \dots, g_m$ to the powers given by its first $m$ elements. 

	For the purposes of this proof, we consider $\mathbb{N}_{0}^{n+m}$ to be equipped with a natural partial ordering such that $\mathbf{a} \geq  \mathbf{b}$ if $a_i \geq b_i$ for all $1 \leq i \leq n+m.$ Suppose that $(a_1, \dots, a_m, b_1, \dots, b_n), (c_1, \dots, c_m, d_1, \dots, d_n) \in \mathfrak{I}$ such that: $$(a_1, \dots, a_m, b_1, \dots, b_n) > (c_1, \dots, c_m, d_1, \dots, d_n).$$ We observe that $(a_1-c_1, \dots, a_m-c_m, b_1-d_1, \dots, b_n-d_n) \in \mathfrak{I}$. Actually, this claim follows easily from the following equalities: $$p \cdot g_1^{a_1-c_1} \dots, g_m^{a_m-c_m} = g_1^{a_1} \dots, g_m^{a_m} = h_1^{b_1} \dots h_n^{b_n} = h_1^{b_1-d_1} \dots h_n^{b_n-d_n} \cdot p$$
where all $a_i-c_i, b_j-d_j \geq 0$ (at least one such inequality is strict) and $p = g_1^{c_1} \dots, g_m^{c_m} = h_1^{d_1} \dots h_n^{d_n}.$ Cancelling $p$ out (which is available to us by virtue of $B$ being a domain), we obtain: $$g_1^{a_1-c_1} \dots, g_m^{a_m-c_m} = h_1^{b_1-d_1} \dots h_n^{b_n-d_n}$$ what we wanted to prove.

	By Dickson's lemma, which says that every subset of $\mathbb{N}_{0}^{n+m}$ has finitely many minimal elements with respect to the natural partial order (Theorem 5 on page 71 in \cite{CLO} gives equivalent formulation in related terms of monomial ideals), there are finitely many elements $\mathbf{i}_1, \dots, \mathbf{i}_k$ in $\mathfrak{I}$ such that for each $\mathbf{a} \in \mathfrak{I}$ at least one of these elements lies beneath it. By induction on $||\mathbf{a}||_1 = \sum_{i=1}^{m+n} a_i$, we will prove that for each $\mathbf{a} \in \mathfrak{I}$ there are $c_1, \dots, c_k \in \mathbb{N}_0$ such that $\mathbf{a} = c_1\mathbf{i}_1 + \dots + c_k\mathbf{i}_k$.

	Let $\mathbf{a}$ have the minimal norm $||\mathbf{a}||_1$ over $\mathfrak{I}$; clearly, $\mathbf{a}$ is then one of $\mathbf{i}_1, \dots, \mathbf{i}_k$ as there is no element of $\mathfrak{I}$ strictly beneath it (any $\mathbf{b}$ with $\mathbf{a} > \mathbf{b}$ has to have strictly smaller norm). The induction arguments goes as follows: let $\mathbf{a} \in \mathfrak{I}$; then either $\mathbf{a}$ is one of $\mathbf{i}_1, \dots, \mathbf{i}_k$ or one of $\mathbf{i}_1, \dots, \mathbf{i}_k$ is strictly beneath $\mathbf{a}$, thus $\mathbf{a}-\mathbf{i}_j \in \mathfrak{I}$ for some $1 \leq j \leq k$. However, $||\mathbf{a}||_1 > ||\mathbf{a} -\mathbf{i}_j||_1$, which enables us to apply the inductive assumption.

	This means, by the surejection $\mathfrak{I}$ onto $M$, that each element of $M$ is a product of powers of $g_1^{i_1^1} \dots, g_m^{i_m^1}, \dots, g_1^{i_1^k} \dots, g_m^{i_m^k}$. These elements, in turn, generate $S_1 \cap S_2$ as $K-$algebra.
\end{proof}

The Proposition \ref{P18} can be readily applied to subalgebras of polynomial rings over $K$ generated by monomials.

\begin{theorem}\label{T19}
	Let $S_1, S_2 \subseteq K[x_1, \dots, x_n]$ be subalgebras generated by monomials. Then, $S_1 \cap S_2$ is finitely generated.
\end{theorem} 
\begin{proof}
	At first, we observe that for $i=1,2$ and for each $f \in S_i$ all monomials whose sum $f$ belongs to $S_i.$ However, all elements of $S_i$ are $K-$linear combinations of products of powers of its generators. Such products are indeed monomials, provided that we assume $S_i$ is generated by monomials, and clearly belong to $S_i$. Our claim then follows from the uniqueness of expression of a polynomial in $K[x_1, \dots, x_n]$ as a $K-$linear combination of monomials. Monomials form a basis of $K[x_1, \dots, x_n]$ as a vector space over $K$.

	Suppose that $f \in S_1 \cap S_2$, and $f = \sum_{i=1}^m c_if_i$ where $c_1, \dots, c_m \in K$ and $f_1, \dots, f_m$ are monomials. We know that, as $f \in S_i$, $f_1, \dots, f_m \in S_i$ for $i = 1,2$. This means that $g_1, \dots, g_m \in S_1 \cap S_2$ and that $S_1 \cap S_2$ is generated by monomials. The algebra $S_1 \cap S_2$ is generated by all its elements. However, any such element is a sum of monomials in $S_1 \cap S_2$; thus, $S_1 \cap S_2$ is generated by its monomials.

	To prove that $S_1 \cap S_2$ is finitely generated, we use the proposition above. Let $g_1, \dots, g_m$ and $h_1, \dots, h_n$ be monomial generators of $S_1$ and $S_2$, respectively. Take $M$, the intersection of multiplicative sets $M_1$ and $M_2$ generated by $1, g_1, \dots, g_m$ and $1, h_1, \dots, h_n$, respectively. Suppose that $f \in S_1 \cap S_2$ is a monomial. Then, $f \in M_1$ and $f \in M_2$ by uniqueness of expression of elements of $K[x_1, \dots, x_n]$ as sums of monomials. Consequently, $f \in M$; therefore, all monomials in $S_1 \cap S_2$ are in $M$, and $M$ generates $S_1 \cap S_2$ as a vector space over $K$.	
\end{proof}

\section{Examples and local properties}\label{Sec5}
In this section, we deal mainly with applications of the results developed above in specific situations and examples. At first, we give another partial solution to the main problem of this article---whether a pullback of a diagram of Noetherian rings (finitely generated algebras over $R$) is finitely generated. Secondly, we revisit the motivational examples given in Section \ref{Sec1}. Finally, we investigate some examples of properties of pushouts of algebraic sets arising from glueing individual points together with focus on their local properties.

We begin by proving that, in some cases, the property of being Noetherian (or a finitely generated $R$-algebra) is preserved under taking pullback. Our result generalizes the main result on pullbacks of Noetherian rings in Theorem 3.2 of \cite{BW}.
\begin{theorem}\label{T20}
	Let $A,B,C$ be Noetherian (finitely generated $R-$algebras), and let $\varphi: B \to A$ and $\psi: C \to A$ be their homomorphisms such that $\varphi^{-1}(\mathrm{Im}\,\varphi \cap \mathrm{Im}\,\psi)$ and $\psi^{-1}(\mathrm{Im}\,\varphi \cap \mathrm{Im}\,\psi)$ contain a regular ideal of $B$ and $C$, respectively. Then, the pullback of the corresponding diagram:
	$$\begin{tikzcd}
		B \arrow{dr}[']{\varphi} & & C \arrow{dl}{\psi}\\
		& A &	
	\end{tikzcd}$$ is Noetherian (finitely generated $R-$algebra) if and only if $B$ and $C$ are finitely generated as modules over $\varphi^{-1}(\mathrm{Im}\,\varphi \cap \mathrm{Im}\,\psi)$ and $\psi^{-1}(\mathrm{Im}\,\varphi \cap \mathrm{Im}\,\psi)$, respectively.
\end{theorem}
\begin{proof}
	We apply Proposition \ref{P5} and Lemma \ref{L7} to $\varphi^{-1}(\mathrm{Im}\,\varphi \cap \mathrm{Im}\,\psi) \subseteq B$ and $\psi^{-1}(\mathrm{Im}\,\varphi \cap \mathrm{Im}\,\psi) \subseteq C$. We simply set $f$ to be the non-zero-divisior in the regular ideal of $B$ contained in $\varphi^{-1}(\mathrm{Im}\,\varphi \cap \mathrm{Im}\,\psi)$ or the regular ideal of $C$ contained in $\psi^{-1}(\mathrm{Im}\,\varphi \cap \mathrm{Im}\,\psi)$, respectively. 
\end{proof}

Now, we discuss the motiovational examples of contracting a line (in the affine plane) and two points (on an affine line) to a single point with the help of the results established above:

\begin{example}[Contracting a line in $\mathbb{A}_K^2$; included without proof as Example 3.5 in \cite{Sch}, page 7]
	Consider this diagram:
	$$\begin{tikzcd}
		K[x,y] \arrow{dr}[']{\pi}& & K \arrow{dl}{\iota}\\
		& K[x,y]/(y) &
	\end{tikzcd}$$ with $\iota$ and $\pi$ being canonical inclusion and projection, respectively. The pullback of this diagram is $K+(y)$.

	Theorem \ref{T20} gives us that $K+(y)$ is finitely generated $K-$algebra if and only if $K[x,y]$ is a finitely generated $K+(y)-$module. By Lemma \ref{L14}, this is equivalent to $K[x] \cong K[x,y]/(y)$ being finitely generated $K+(y)-$module; that clearly does not hold, as, otherwise, $K[x]$ would have to be a finite dimensional vector space over $K \cong K+(y)/(y)$.

	Therefore, $K+(y)$ is not a finitely generated $K-$algebra, and it is impossible to contract a line in $\mathbb{A}_K^2$ into a point in the category of algebraic sets over $K$.
\end{example}

\begin{example}[Contracting a finite number of points on algebraic varieties over $K$; a special case given as Example 3.6 in \cite{Sch}, page 7]
	Let $L$ be a finitely generated $K-$algebra, and let $I$ be an intersection of finitely many maximal ideals of $L$. Consider this diagram:
	$$\begin{tikzcd}
		L \arrow{dr}[']{\pi}& & K \arrow{dl}{\iota}\\
		& L/I &
	\end{tikzcd}$$ with $\iota$ and $\pi$ being canonical the inclusion and projection, respectively. The pullback of this diagram is $K+I$, as in the previous example. This situation can be viewed as contracting a finite number of points to a single one.

	Suppose that $\mathfrak{m}_1, \dots, \mathfrak{m}_n$ are maximal ideals of $L$ such that $I = \bigcap_{i=1}^n \mathfrak{m}_i.$ However, by Theorem 4.19 on page 132 in \cite{Eis}, $K \subseteq L/\mathfrak{m}_i$ is a finite field extension. This provides us, by virtue of Lemma \ref{L14}, that $L$ is a finitely generated $K + \mathfrak{m}_i-$module for all $i$. Using Theorem \ref{T17} inductively, we get that $L$ is a finitely generated $K + \bigcap_{i=1}^n \mathfrak{m}_i-$module. Hence, $K+I$ is a finitely generated $K-$algebra by Theorem \ref{T4}.
\end{example}

For the rest of this section, we concern ourselves with studying glueing points on algebraic sets and thereby arising singularities. 

We prove a result whose special case shows, informally speaking, that glueing finitely many points of an algebraic variety results in a singularity of order at least of the product of dimension of the variety and the number of glued points. Also, we discuss some finer properties of $K-$algebras and singularities arising in some cases of glueing points on the affine line.

\begin{proposition}\label{P20}
	Suppose that $B$ is an integral domain finitely generated as an algebra over $R$. Suppose, furthermore, that $S \subseteq B$ is its subalgebra and $I \subseteq S$ is an ideal of $B$ which can be written as an intersection of prime ideals, $I = \bigcap_{a \in A} \mathfrak{p}_a.$ Finally, assume that $h_1 + I, \dots, h_k + I$ form a free basis of an $S/I-$module $B/I$. Provided that $\{i_1, \dots, i_n\} \subseteq I$ are linearly independent in ${\mathfrak{p}_a}_{\mathfrak{p}_a}/{\mathfrak{p}_a}_{\mathfrak{p}_a}^2$ for each $a \in A,$ then $\{i_\ell h_1, \dots, i_\ell h_k; \ell = 1, \dots, n\}$ are linearly independent in $I_I/I_I^2$. Note that, here, we think of $I_I$ as an ideal of $S_I.$
\end{proposition}
\begin{proof}
	Assume there exist coefficients $s_{\ell j} + I_I \in S_I/I_I$ with $s_{\ell j} \in S$, without loss of generality, such that for all $1 \leq \ell \leq n$ and $1 \leq j \leq k$ $\sum_{j,\ell} i_\ell h_j s_{\ell j}$ lies in $I_I^2.$ Taking $s_{\ell j} \in S$ can be justified by thinking about them as elements of the respective quotient field $Q(B)$; we can then cancel out all their denominators by multiplying with elements of $S-I$, under which is $S$ closed.

	For all $\ell$, we set $h'_\ell = \sum_{j} h_j s_{\ell j}$. Since we assume that $I \subseteq \mathfrak{p}_a$ for all $a \in A,$ it follows that $I_I^2 \subseteq {\mathfrak{p}_a}_{\mathfrak{p}_a}^2$. Hence, $i_1 h_1' + \dots + i_n h_n'$ lies in ${\mathfrak{p}_a}_{\mathfrak{p}_a}^2$ for every $a \in A$. However, we also suppose that $i_1 + {\mathfrak{p}_a}_{\mathfrak{p}_a}^2, \dots, i_n + {\mathfrak{p}_a}_{\mathfrak{p}_a}^2$ are linearly independent. Therefore, for every $1 \leq \ell \leq n$, the element $h_\ell'$ has to be in ${\mathfrak{p}_a}_{\mathfrak{p}_a}.$ Moreover, as each $h_\ell' \in B,$ $h_\ell'$ is an element of $\mathfrak{p}_a$ for all $1 \leq \ell \leq n$ and $a \in A.$

	This means that $h'_\ell \in \bigcap_{a \in A} \mathfrak{p}_a = I$ for every $1 \leq \ell \leq n.$ Take any such $\ell,$ we now know that $h'_\ell + I = 0 + I.$ Let us expand that to $\sum_{j=1}^k h_js_j + I = 0 + I.$ Since we assume that $h_1, \dots, h_\ell$ form a free basis of $B/I$ as an $S/I-$module, all $s_{\ell j} + I$ need to be zero for all possible $\ell$ and $j$. Therefore, the set $\{i_\ell h_1, \dots, i_\ell h_k; \ell = 1, \dots, n\}$ is linearly independent in $I_I/I_I^2.$
\end{proof}

\begin{remark}
	The proposition above can conveniently applied in the case where $B$ is a coordinate ring of a $K-$algebraic variety $X$. Suppose that the ground field $K$ is algebraically closed and that $S = K + I$. We set $I$ to be the ideal of $B$ such that $I = \bigcap_{i=1}^n \mathfrak{m}_i$ for some maximal ideals $\mathfrak{m}_1, \dots, \mathfrak{m}_n$ of $B.$ Then $K+I$ can be thought of as a coordinate ring of an algebraic variety $X$ with finitely many points corresponding to the ideals $\mathfrak{m}_1, \dots, \mathfrak{m}_n$ are identified. See the latter example above.
	
	We can observe that $B/I$ is a finite dimensional vector space as in the example; hence, it is a free module over $S/I \cong K$. Also, $I$ is maximal, thus prime, ideal of $K+I.$ Let us now suppose that $\mathfrak{m}_i = (x_1-a_{1i}, \dots, x_m - a_{mi})/I(X)$ for all $1 \leq i \leq n$ with all $a_{ji} \in K$. For simplicity, let us also assume that $a_{ji} \neq a_{j'i}$ for $j \neq j'$. This guarantees that $x_j - a_{ji} \notin \mathfrak{m}_{i'}/I(X)$ for $i' \neq i.$
	
	It is clear that $(x_1-a_{1i}) + \mathfrak{m}_{i\,\mathfrak{m}_i}^2, \dots, (x_m - a_{mi}) + \mathfrak{m}_{i\,\mathfrak{m}_i}^2$ generate $\mathfrak{m}_{i\,\mathfrak{m}_i}/\mathfrak{m}_{i\,\mathfrak{m}_i}^2$ for each $1 \leq i \leq n.$ Thus, there is a set of indices $J_i$ such that $(x_1-a_{j_1^i i}) + \mathfrak{m}_{i\,\mathfrak{m}_i}^2, \dots, (x_{j_i^i} - a_{j_i^i i}) + \mathfrak{m}_{i\,\mathfrak{m}_i}^2$ form a basis of $\mathfrak{m}_{i\,\mathfrak{m}_i}/\mathfrak{m}_{i\,\mathfrak{m}_i}^2.$ Without loss of generality, we assume that $|J_1| = k$ is the smallest of such indices. We observe that $(x_{j_1^1}-a_{j_1^1 1})(x_{j_1^2}-a_{j_1^2 2}) \dots (x_{j_1^n}-a_{j_1^n n}) + I(X), \dots, (x_{j_k^1}-a_{j_k^1 1})(x_{j_k^2}-a_{j_k^2 2}) \dots (x_{j_k^n}-a_{j_k^n n}) + I(X)$ belong to $\mathfrak{m}_1 \dots \mathfrak{m}_n/I(X) \subseteq I.$
	
	Now, notice that these $k$ elements of $\mathfrak{m}_1 \dots \mathfrak{m}_n$ are linearly independent in $\mathfrak{m}_{i\,\mathfrak{m}_i}/\mathfrak{m}_{i\,\mathfrak{m}_i}^2$, for each $1 \leq i \leq n$, since $(x_{j_\ell^1}-a_{j_\ell^1 1})(x_{j_\ell^2}-a_{j_\ell^2 2}) \dots (x_{j_\ell^n}-a_{j_\ell^n n}) + \mathfrak{m}_{i\,\mathfrak{m}_i}^2$ and $(x_{j_\ell^i}-a_{j_\ell^i i}) + \mathfrak{m}_{i\,\mathfrak{m}_i}^2$ span the same subspace of $\mathfrak{m}_{i\,\mathfrak{m}_i}/\mathfrak{m}_{i\,\mathfrak{m}_i}^2$. This is due to the fact that for all $i$ and $j$ we have that $x_j - a_{ji} \notin \mathfrak{m}_{i'}/I(X)$ for $i' \neq i.$
	
	By the proposition above, the $S_I/I_I-$dimension of $I_I/I_I^2$ in $S_I$ is at least $k\cdot\dim_K B/I$; so the smallest dimension of a tangent space among identified points $(a_{11}, \dots, a_{1m}), \dots, (a_{n1}, \dots, a_{nm})$ times the number of points which is equal to $\dim_K B/I.$ Therefore, the order of the arising singularity is at least proportional to the number of identified points.
\end{remark}

The singularities arising from glueing finitely many points on affine algebraic sets can be studied even more closely by looking at corresponding rings of formal power series. In treating the following example, we assume that $\mathrm{char}\,K = 0$.

\begin{example}[Glueing finitely many points on $\mathbb{A}_K^1$]
	Let $a_1, \dots, a_n \in K$ be distinct. Then the ideal $I$ defining $\{a_1, \dots, a_n\}$ is generated by $\varphi_0(x) = (x-a_1) \dots (x-a_n)$. Our goal now is to describe $K+I$.
	
	We know that $K+I$ is a finitely generated algebra over $K.$ At first, we find its generators and relations between them. We claim that $K+I$ is generated by $\varphi_0(x), \dots, \varphi_{n-1}(x)$ where $\varphi_{i+1}(x) = x \varphi_i(x)$ for $0 \leq i \leq n-2$. The proof this claim goes by induction on degree of the non-zero polynomial $f \in K+I$. Actually, it suffices to assume that $f \in I.$

	As $a_1, \dots, a_n \in K$ are distinct, there are no polynomials of degree less than $n$ in $I$, and there is, up to a multiple by an element of $K$, only one polynomial of degree $n$, $\varphi_0(x).$

	Let $f \in I$ be of degree $m > n$. By the inductive assumption, we know that all polynomials of smaller degree belong to $K[\varphi_0(x), \dots, \varphi_{n-1}(x)]$. Write $m = kn + r,$ where $1 \leq k$ and $0 \leq r \leq n-1$. Denote $\ell$ the leading coefficient of $f.$ Then, $f-\ell \varphi_0^{k-1}\varphi_{r}$ is of strictly smaller degree. We can conclude the proof by pointing out that both $f-\ell \varphi_0^{k-1}\varphi_{r}$ and $\ell \varphi_0^{k-1}\varphi_{r}$ belong to $K[\varphi_0(x), \dots, \varphi_{n-1}(x)]$.

	We know that $K+I \cong K[x_0, \dots, x_{n-1}]/J$ with $J$ an ideal of $K[x_0, \dots, x_{n-1}]$, which is given by the homorphism extending $x_i \mapsto \varphi_i$ for $0 \leq i \leq n-1.$ We claim that $J$ is generated by two types of relations: $x_{i}x_{j} = x_{k}x_{\ell}$ for all $i+j = k + \ell$ and $x_{i}x_{j}x_{k} = b_n x_{n-1} x_{i+j+k+1} + \sum_{\ell=1}^{n} b_{n-\ell} x_{n - \ell} x_{i + j + k}$ where $i+j+k \leq n-2$ and $b_{n-\ell}$ are coefficients of $\varphi_0$.

	The proof is not difficult and the strategy is to proceed by induction on degree of a relation as follows:
	\begin{enumerate}
		\item Show that if $i_1 + \dots + i_m = j_1 + \dots + j_m$, then $x_{i_1} \dots x_{i_m} - x_{j_1} \dots x_{j_m} \in J$.
		\item Let $F(x_0, \dots, x_{n-1}) \in J$ be a homogenous relation of degree $m$. Observe that it can be rewritten as a sum of homogenous relations of the same degree $F(x_0, \dots, x_{n-1}) = \sum_{i=0}^{m(n-1)} F_i(x_0, \dots, x_{n-1})$. The polynomials $F_i(x_0, \dots, x_{n-1})$ are a $K$-linear combinations of monomials $x_{j_1} \dots x_{j_m}$ where $i = j_1 + \dots + j_m$ and whose coefficients add up to zero. Inductively, using the claim  above, it is possible to deduce that all $F_i(x_0, \dots, x_{n-1}) \in J$ (as, under $x_i \mapsto \varphi_i$, $F_i$ is mapped to a homogenous polynomial in $x$ of degree $i$) and so $F(x_0, \dots, x_{n-1}) \in J.$
		\item Suppose that $F(x_0, \dots, x_{n-1}) \in J$ is a general relation of degree $m$. Denote $F_m(x_0, \dots, x_{n-1})$ the $m-$th homogenous part of $F$. For a part $F_m'$ of $F_m$, we can lower the degree using the relations of the second type. On the other hand, we show that $F_m - F_m'$ is mapped (under the extension of $x_i \mapsto \varphi_i$) to polynomials in $x$ of degree high enough such that we need to have $F_m - F_m' \in J$. This gives us that $F - F_m + F_m' \in J$; however, lowering the degree of $F_m'$ results in $F - F_m + F_m'$ being of degree $m-1$, at most, allowing us to proceed by induction on degree.
	\end{enumerate}

	Finally, let us examine two specific cases: glueing two and three points on $\mathbb{A}_K^1$. Recall that we work under the assumption that $\mathrm{char}\,K = 0$ and that $K$ is algebraically closed. For convenience, we glue roots of unity in both cases.

	Identifying roots of $x^2-1$ on $\mathbb{A}_K^1$, we get a variety $V_2$ with the following coordinate ring: $$K[x_0,x_1]/(x_0^3 - x_1^2 + x_0^2)$$ with $x_0^3 - x_1^2 + x_0^2$ being an instance of the rule of the second type. To examine the resulting singularity at $0$ closely, we move to the ring of formal power series of this variety at $0$. We get $K[[x_0,x_1]]/(x_0^2(1+x_0)-x_1^2)$ (consult chapter 7 of \cite{Eis} for details). However, we can take $u \in K[[x_0,x_1]]$ as a formal square root of $1+x_0$, which is also invertible. The ring $K[[x_0,x_1]]/(x_0^3 - x_1^2 + x_0^2)$ is, thus, isomorphic to: $$K[[y_0,y_1]]/((y_0-y_1)(y_0+y_1)).$$ This means that the resulting singularity looks locally like a pair of intersecting lines.

	Identifying roots of $x^3-1$ on $\mathbb{A}_K^1$, by using our results above, we get a variety $V_3$ with the following coordinate ring: $$K[x_0,x_1,x_2]/(x_0x_2 - x_1^2, x_0^3 - x_1x_2 + x_0^2, x_0^2x_1-x^2_2+x_0x_1).$$ The first relation is of the first type; the latter two are of the second type. As above, we examine the corresponding ring of formal power series. Rewrite the latter two relations as $x_0^2(1+x_0)-x_1x_2$ and $x_0x_1(1+x_0)-x_2^2.$ We are able to find $v \in K[[x_0, x_1, x_2]]$ such that $v^3 = 1+x_0$. Furthermore, this $v$ is invertible. Rewrite the relations as $x_0 (x_2 v^{-2})-(x_1 v^{-1})^2,$ $x_0^2 - (x_1 v^{-1})(x_2 v^{-2}),$ and $x_0(x_1 v^{-1}) - (x_2 v^{-2})^2.$ This means that $K[[x_0,x_1,x_2]]/(x_0x_2 - x_1^2, x_0^3 - x_1x_2 + x_0^2, x_0^2x_1-x^2_2+x_0x_1)$ is isomorphic to: $$K[[y_0,y_1,y_2]]/(y_0^2-y_1y_2, y_1^2-y_0y_2, y_2^2-y_0y_1).$$
	
	Take $K[y_0,y_1,y_2]/(y_0^2-y_1y_2, y_1^2-y_0y_2, y_2^2-y_0y_1)$, and set $y_1 = a$ for non zero $a \in K.$ This results in $a^2 = y_1y_2,$ $y_1^2 = ay_2$, and $y_2^2 = ay_1.$ Take $y_2 = \frac{a^2}{y_1}$; then both remaining equations can be written as $y_1^3 = a^3.$ Denote $\xi_1, \xi_2, \xi_3$ three distinct roots of $x^3-1$ in $K$, where $\xi_1 = 1$ and $\xi_1, \xi_2$ are roots of $x^2-x+1$. We have three solutions $(a,a,a)$, $(a,\xi_1 a, \xi_2 a)$, and $(a,\xi_2 a, \xi_1 a).$ Choosing $y_1 = 0,$ we get $y_1 = y_2 = 0.$ This means that $K[y_0,y_1,y_2]/(y_0^2-y_1y_2, y_1^2-y_0y_2, y_2^2-y_0y_1)$ is the coordinate ring of three lines which span $K^3$ as a vector space (vectors $(1,1,1), (1,\xi_1, \xi_2),$ and $(1,\xi_2,\xi_1)$ are linearly independent by regularity of Vandermonde matrix, since $\xi_1^2 = \xi_2$ and $\xi_2^2 = \xi_1$).

	The singularity of $V_3$ at $0$, hence, looks locally like three distinct lines intersecting in a single point.
\end{example}

\section*{Acknowledgments}
This article is a substantially revised and extended version of a part of the author's master thesis defended at the Charles University, Faculty of Mathematics and Physics in September 2018. The author would like to extend his thanks to the thesis supervisor Jan \v{S}\v{t}ov\'{i}\v{c}ek for all his invaluable help in the process of writing the thesis and this text.

The author gratefully acknowledges support from grant \textit{GA\v{C}R 17-23112S} of the Czech Science Foundation.

\bibliographystyle{plain}
\bibliography{bibliography}

~\\[0.05cm]
\noindent
\textsc{Charles University, Faculty of Mathematics and Physics, Department of Algebra, Sokolovska 83, 186 75 Praha, Czech Republic}\\\\
\textit{Email address:} \texttt{jakub.kopriva@outlook.com}

\end{document}